\newtheorem{problem}{Problem}
\newtheorem{lemma}{Lemma}
\newtheorem{theorem}{Theorem}
\theoremstyle{definition}
\newtheorem{example}{Example}
\theoremstyle{remark}
\newtheorem {remark}{Remark}
\def\char{{\rm char\,}}
\def\diag{{\rm diag}}
\def\rk{{\rm rk}}
\def\tr{{\rm tr}}
\def\Ker{{\rm Ker}}
\def\GL{{\rm GL}}
\def\Com{{\rm Com}}
\def\dim{{\rm dim}}
\def\C{\mathbb {C}}
\def\K{\mathbb {K}}
\def\F{\mathbb{F}}
\def\a{\lambda}
\def\sl{\mathfrak{sl}}
\begin{document}
\date{}
\title[On nilpotent generators of the Lie algebra $\mathfrak{sl}_n$]{On nilpotent generators of the Lie algebra $\mathfrak{sl}_n$}
\author{Alisa Chistopolskaya}
\address{Lomonosov Moscow State University, Faculty of Mechanics and Mathematics, Department of Higher Algebra, Leninskie Gory 1, Moscow, 119991 Russia}
\email{achistopolskaya@gmail.com}

\subjclass[2010]{Primary 17B05, 17B22; \ Secondary 15A04}

\keywords{Nilpotent matrix, commutator, Lie algebra, generators}
\maketitle

\begin{abstract}
Consider the special linear Lie algebra $\mathfrak{sl}_n(\K)$ over an infinite field of characteristic different from $2$. We prove that for any nonzero nilpotent $X$ there exists a nilpotent $Y$ such that the matrices $X$ and $Y$ generate the Lie algebra $\mathfrak{sl}_n(\K)$. 
\end{abstract}

\section{introduction}
It is an important problem to find a minimal generating set of a given algebra. This problem was studied actively for semisimple Lie algebras. In 1951, Kuranishi \cite{MK} observed that any semisimple Lie algebra over a field of characteristic zero can be generated by two elements. Twenty-five years later, Ionescu \cite{TI} proved that for any nonzero element $X$ of a complex or real simple Lie algebra $\mathcal{G}$ there exists an element $Y$ such that the elements $X$ and $Y$ generate the Lie algebra $\mathcal{G}$. In the same year, Smith \cite{JS} proved that every traceless matrix of order $n \geqslant 3$ is the commutator of two nilpotent matrices. These results imply that the special linear Lie algebra $\sl_n$ can be generated by three nilpotent matrices. In 2009, Bois \cite{JB} extended Kuranishi's result to algebraically closed fields of characteristic different from $2$ and $3$. 

In this paper we obtain an analogue of Ionescu's result for nilpotent generators of the Lie algebra $\sl_n$. 

\begin{theorem} \label{theorem}
Let $\K$ be an infinite field of characteristic different from $2$. For any nonzero nilpotent $X$ there exists a nilpotent $Y$ such that the matrices $X$ and $Y$ generate the Lie algebra $\mathfrak{sl}_n(\K)$.
\end{theorem}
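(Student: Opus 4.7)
The plan is to reduce $X$ to Jordan normal form, exhibit an explicit nilpotent $Y$ adapted to that form, and verify that the Lie subalgebra $L$ generated by $X$ and $Y$ equals $\sl_n(\K)$.

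Since the generating property is invariant under simultaneous $\GL_n(\K)$-conjugation, I would first assume $X$ is in Jordan normal form $X = J_{\lambda_1} \oplus \cdots \oplus J_{\lambda_k}$ with $\lambda_1 \geq \cdots \geq \lambda_k$ and $\lambda_1 \geq 2$ (since $X$ is nonzero). In these coordinates $X$ is a sum of superdiagonal matrix units $E_{i, i+1}$ running over a prescribed set of indices determined by the block sizes.

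Next, I would construct $Y$ as a nilpotent matrix of ``opposite'' shape that cyclically links the Jordan blocks. For $X$ regular nilpotent (a single block of size $n$), a natural candidate is $Y = E_{n, 1}$; for general Jordan type, take $Y$ as a sum of subdiagonal matrix units forming a long cycle through the blocks, chosen so that $Y$ remains nilpotent. The verification that $L = \sl_n(\K)$ would then proceed by computing iterated brackets $[X, Y]$, $[X, [X, Y]]$, $[Y, [X, Y]], \ldots$, aiming to show successively: first, that $L$ contains a regular semisimple element $H$ (so that $L$ contains a full Cartan subalgebra $\mathfrak{h}$); second, that $L$ meets each simple root space of $\sl_n(\K)$ with respect to $\mathfrak{h}$; and third, that the resulting Chevalley-like generators span all of $\sl_n(\K)$.

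The main obstacle I anticipate is the case where $\char \K$ divides $n$. In such characteristics, natural combinations of Cartan elements can collapse to central scalar matrices (for instance $h_1 - h_2 = I$ in $\sl_3(\K)$ when $\char \K = 3$), which prevents the bracket computation from producing a regular semisimple element and can force $L$ into a proper subalgebra. To bypass this I would introduce a one-parameter family $Y_t = u(t)\, Y\, u(t)^{-1}$ obtained by conjugating a fixed nilpotent by a parameter-dependent element of $\GL_n(\K)$, so each $Y_t$ is automatically nilpotent. Since the condition ``$X$ and $Y_t$ generate $\sl_n(\K)$'' is Zariski open in $t$ and $\K$ is infinite, it then suffices to exhibit a single good specialization (possibly first over the algebraic closure and then descended to $\K$). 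The technical core of the proof is producing such an explicit good $Y$ for each Jordan type of $X$ in a way that is simultaneously valid in all characteristics different from $2$.
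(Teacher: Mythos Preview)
Your outline has a concrete failure and a missing idea. First, the candidate $Y = E_{n,1}$ for the regular nilpotent $X = \sum_{i=1}^{n-1} E_{i,i+1}$ does not work when $n$ is even: both matrices lie in the proper subalgebra $\Lambda = \{A \in \sl_n(\K) : AC^{-1} + C^{-1}A^T = 0\}$ for $C = \sum_i (-1)^i E_{i,\,n+1-i}$, so the pair cannot generate $\sl_n(\K)$. This already shows the ``opposite shape'' heuristic is too naive, and similar parity obstructions will recur for the cyclic-linking candidates you suggest for general Jordan type. Second, your Zariski-openness step with $Y_t = u(t)Yu(t)^{-1}$ only helps once you know the open set is nonempty; you explicitly defer ``producing such an explicit good $Y$'' to the end, which is precisely the problem you started with. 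Varying $Y$ among conjugates of a fixed bad $Y$ gives an empty open set.

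The paper's approach sidesteps both issues with two ideas you are missing. It first solves the rank-one case by an explicit construction: a \emph{consistent} diagonal matrix $T$ (distinct nonzero entries with generic pairwise differences) is written as $T = A + B$ with $A,B$ nilpotent, $\rk A = 1$, and every entry of $A$ nonzero; then $[T,-]$ acting on $A$ produces, via a Vandermonde argument, all off-diagonal matrix units, so $A$ and $B$ generate $\sl_n(\K)$. This handles $X$ of rank one. For general $X$, the paper conjugates $X$ to the superdiagonal form $A = \sum a_i E_{i,i+1}$ with $a_1 = 1$ and then applies openness on the \emph{$X$-side}, not the $Y$-side: fixing the nilpotent $B_0$ that works with $E_{12}$, it varies $X$ along the family $\sum x_i a_i E_{i,i+1}$, which connects $E_{12}$ (at $x_1=1$, $x_j=0$ for $j>1$) to a conjugate of $A$ (all $x_i \neq 0$). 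Since the generating condition is open and nonempty (witnessed by $E_{12}$), some point with all $x_i\neq 0$ works, and conjugating back finishes the proof. The key inversion relative to your plan is that one nilpotent $B_0$ serves for all Jordan types, and the deformation is in $X$ rather than in $Y$.
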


Our interest to this subject was motivated by the study of additive group actions on affine spaces, see \cite[Theorem~5.17]{AKZ}. In a forthcoming publication we plan to extend Theorem~\ref{theorem} to arbitrary simple Lie algebras.  

The author is grateful to her supervisor Ivan Arzhantsev for posing the problem and permanent support.

\section{main results}

Let $\K$ be an infinite field with $\char\K \neq 2$.
A set of elements $\a_1,\ldots,\a_n$ ($\a_i \in \K $) is called \textit{consistent} if the following conditions hold:
\begin{enumerate}
    \item $\a_1 +\ldots+\a_n = 0$;
    \item $\a_i \neq 0$ for all $i$;
    \item $\a_i \neq \a_j$ for all $i \neq j$;
    \item $\a_i - \a_j = \a_k - \a_l$ only for $i = j,\; k = l$ or $i = k,\; j = l$.
\end{enumerate}

Condition $(1)$ defines $(n-1)$-dimensional subspace $W \subseteq \K^n$. Conditions $(2)$-$(4)$ define linear inequalities on $W$ whose set of solutions is nonempty. For example, if $\char\K = 0$, a set $\a_i = 2^{i-1}$ for $i = 1,\ldots,n-1$ and $\a_n = 1 - 2^{n-1}$ is consistent.

A diagonal matrix $A= \diag(a_{11},\dots,a_{nn})$ is called \textit{consistent} if $a_{11},\dots,a_{nn}$ is a consistent \nolinebreak set.

\begin{lemma} \label{lemma1}
Let $T$ be a consistent matrix and $A$ be a matrix with nonzero entries outside the principal diagonal. Then $T$ and $A$ generate the Lie algebra $\mathfrak{sl}_n(\K)$.
\end{lemma}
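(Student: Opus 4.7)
The plan is to exploit the fact that $\mathrm{ad}_T$ acts diagonally on the off-diagonal elementary matrices $\{E_{ij}\}_{i\neq j}$, with eigenvalues $\a_i - \a_j$ that, by the consistency hypothesis, are pairwise distinct and nonzero. This will let me isolate each $E_{ij}$ inside the Lie subalgebra $L \subseteq \mathfrak{sl}_n(\K)$ generated by $T$ and $A$, after which recovering the Cartan part is standard.

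First I would decompose $A = D + \sum_{i \neq j} a_{ij}E_{ij}$, where $D$ is the diagonal part of $A$ and $a_{ij} \neq 0$ for every $i \neq j$. Since $[T,D]=0$, iterating the bracket with $T$ gives
\[
(\mathrm{ad}\,T)^k(A) \;=\; \sum_{i\neq j} a_{ij}(\a_i - \a_j)^k E_{ij} \qquad (k \geq 1),
\]
and every such element lies in $L$.

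The key step is a Vandermonde argument. Set $m := n(n-1)$ and $\mu_{ij} := \a_i - \a_j$ for ordered pairs with $i \neq j$. Consistency condition $(3)$ gives $\mu_{ij}\neq 0$, and condition $(4)$ gives $\mu_{ij}\neq\mu_{kl}$ whenever $(i,j)\neq(k,l)$. Hence the $m\times m$ matrix $\bigl(\mu_{ij}^{\,k}\bigr)$, with rows indexed by ordered pairs and columns by $k = 1,\ldots,m$, is a standard Vandermonde matrix rescaled by a nonzero diagonal factor and is therefore invertible over $\K$. Consequently each vector $a_{ij}E_{ij}$ can be written as a $\K$-linear combination of the elements $(\mathrm{ad}\,T)^k(A)$ for $1 \leq k \leq m$, and since every $a_{ij}$ is nonzero we obtain $E_{ij}\in L$ for all $i\neq j$.

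Once every off-diagonal elementary matrix lies in $L$, the brackets $[E_{ij},E_{ji}] = E_{ii}-E_{jj}$ produce all traceless diagonal matrices; combining these with the off-diagonal $E_{ij}$ yields a spanning set of $\mathfrak{sl}_n(\K)$, so $L = \mathfrak{sl}_n(\K)$. The only substantive step is the Vandermonde invertibility, and conditions $(3)$--$(4)$ of consistency were evidently tailored precisely to secure it.
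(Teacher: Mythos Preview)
Your proof is correct and follows essentially the same route as the paper: iterate $\mathrm{ad}_T$ on $A$, observe that the resulting off-diagonal vectors are governed by a Vandermonde-type matrix in the distinct nonzero eigenvalues $\lambda_i-\lambda_j$, deduce that every $E_{ij}$ with $i\neq j$ lies in the subalgebra, and recover the Cartan part via $[E_{ij},E_{ji}]$. Your write-up is a bit more explicit about the diagonal part $D$ of $A$ and about the rescaled Vandermonde structure, but the argument is the same.
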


\begin{proof}
Consider the following matrices:
$$
A_1 = [T, A],\quad A_i = [T, A_{i-1}],\quad i = 2,\ldots,n^2-n.
$$
Note that all matrices $A_i$ have zeroes on the principal diagonal and are linearly independent. Indeed, if we consider the coordinates of these matrices in the basis of $n^2-n$ matrix units, up to scalar multiplication of columns, we come to a Vandermonde matrix with nonzero determinant. 

Hence, the matrices $A_i$ form a basis of the space of $n \times n$-matrices  with zeroes on the principal diagonal. This means that the subalgebra generated by $T$ and $A$ contains all matrix units $E_{ij}$, $i\neq j$. Since $E_{ii} - E_{jj} = [E_{ij}, E_{ji}]$, it follows that this subalgebra is \nolinebreak $\mathfrak{sl}_n(\K)$.
\end{proof}

\begin{lemma} \label{lemma2}
Let $C$ be a diagonal matrix with pairwise distinct nonzero entries on the principal diagonal. Then $C$ can be represented as $C = A + B$ with $A$ and $B$ being nilpotent matrices, $\rk A =1$, and all entries of A are nonzero.
\end{lemma}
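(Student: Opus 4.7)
The plan is to take $A$ of rank one, written $A=uv^T$ for column vectors $u,v\in\K^n$, so that $A_{ij}=u_iv_j$; then ``all entries of $A$ nonzero'' just means $u_i\neq 0$ and $v_j\neq 0$ for all $i,j$. A rank-one matrix is nilpotent iff its trace vanishes, i.e.\ iff $v^Tu=\sum_i u_iv_i=0$. Setting $B:=C-A$, the problem reduces to choosing $u,v$ with nonzero entries so that (i) $\sum_i u_iv_i=0$ and (ii) $B=C-uv^T$ has characteristic polynomial $x^n$.

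For (ii) I would apply the matrix determinant lemma to the diagonal matrix $xI-C=\diag(x-c_1,\ldots,x-c_n)$:
$$
\det(xI-C+uv^T)\;=\;\prod_{i=1}^n(x-c_i)\;+\;\sum_{i=1}^n u_iv_i\prod_{j\neq i}(x-c_j).
$$
Set $w_i:=u_iv_i$ and $p_i(x):=\prod_{j\neq i}(x-c_j)$. Requiring this polynomial to equal $x^n$ is a linear system in the $w_i$. The $p_i$ form a basis of the space of polynomials of degree less than $n$ (because $p_i(c_k)=\delta_{ik}p_i(c_i)$ with $p_i(c_i)\neq 0$), so the system has a unique solution, obtained by evaluating both sides at $x=c_k$:
$$
w_k\;=\;\frac{c_k^n}{\prod_{j\neq k}(c_k-c_j)}.
$$
Each $w_k$ is nonzero because $c_k\neq 0$ and the $c_j$ are pairwise distinct.

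To verify (i), compare coefficients of $x^{n-1}$ on the two sides of $\prod_i(x-c_i)+\sum_i w_ip_i(x)=x^n$: this gives $\sum_k w_k=\sum_k c_k=\tr C$. The hypothesis $\tr C=0$---implicit in the lemma, and in any case forced by the conclusion (since $A,B$ nilpotent implies $\tr(A+B)=0$)---then yields $\sum_i u_iv_i=0$, so $A$ is nilpotent. Finally, since every $w_k$ is nonzero, factor $w_k=u_kv_k$ with $u_k,v_k\neq 0$; the simplest choice is $u_k=1$, $v_k=w_k$. Then $A=uv^T$ has all entries nonzero, and $B=C-A$ is nilpotent by construction.

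The one step worth flagging as the potential obstacle is the use of the matrix determinant lemma to turn ``$B$ nilpotent'' into an explicit linear system for the products $w_k=u_kv_k$; once this reduction is in place, the fact that the unique solution is everywhere nonzero and sums to $\tr C$ (so, to zero) is a straightforward consequence of partial-fraction identities for $x^n/\prod_i(x-c_i)$, with no real obstruction.
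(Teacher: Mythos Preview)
Your proof is correct and takes a genuinely different route from the paper's. You write $A=uv^T$, apply the matrix determinant lemma to $\det(xI-C+uv^T)$, and solve the resulting linear system for the products $w_k=u_kv_k$, obtaining the explicit closed form $w_k=c_k^{\,n}/\prod_{j\neq k}(c_k-c_j)$; nonvanishing of the $w_k$ and the identity $\sum_k w_k=\tr C$ then finish the argument. The paper instead proceeds geometrically: it introduces the Vandermonde basis $v_k=\sum_i c_{ii}^{-(k-1)}e_i$, notes that $C(v_{k+1})=v_k$, and \emph{defines} $B$ as the nilpotent shift $v_{k+1}\mapsto v_k$, $v_1\mapsto 0$; then $A=C-B$ kills $v_2,\dots,v_n$, hence has rank one, and the nonvanishing of its entries is read off from this kernel description. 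Your approach is more computational and makes clear that the diagonal products $u_kv_k$ are uniquely determined; the paper's approach yields the additional structural fact that $B$ is a single Jordan block in the $v$-basis, which is actually used later (Example~2) to identify $A$ and $B$ with $E_{1n}$ and $\sum_i E_{i,i+1}$. You are also right that both arguments tacitly require $\tr C=0$: the paper's line ``Since $\tr A=0$'' presumes it, and you correctly flag it as an implicit (and necessary) hypothesis.
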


\begin{proof}

Let $V$ be an $n$-dimensional vector space over $\K$. Let us fix a basis $e_1,\dots,e_n$ in $V$ and consider linear operators given by matrices in this basis.

For a non-degenerate matrix $C = \diag(c_{11},\ldots,c_{nn})$, let us consider the following set of vectors: 
$$
v_1 = e_1 +\dots+e_n,\;v_2 = \frac {e_1}{c_{11}} +\dots+\frac{e_n}{c_{nn}},\dots, v_n = \frac {e_1}{c_{11}^{n - 1}} +\dots+\frac{e_n}{c_{nn}^{n-1}}.
$$

Again using a Vandermonde matrix we obtain that $v_1,\dots,v_n$ form a basis in $V$. Moreover, we have $C(v_{i+1}) = v_i$ for all $i=1,\dots,n-1$.
Let us define the operator $B$ as follows:
$B(v_1) = 0$, $B(v_{i+1}) = v_i$ for $i = 1,\dots,n-1$, and let $A = C-B$.

Let us check that $A$ and $B$ satisfy the conditions of Lemma~\ref{lemma2}. Indeed, $B$ is nilpotent and $\rk A =1$, because $A(v_i) = 0$ for $i=1,\dots,n-1$. Since $\tr A =0$, $A$ is also nilpotent.
It is only left to show that in the basis $e_1,\ldots,e_n$ all entries of $A$ are nonzero. All columns of $A$ are proportional to the column $(c_{11},\dots,c_{nn})$, because $\rk A =1$ and $A(e_1 +\dots+e_n) = c_{11}e_1 + \dots +c_{nn}e_n$. Finally, we have $A(e_i) \neq 0$, because the vectors $v_2, v_3, \dots, v_n, e_i$ are linearly independent and $v_2, v_3, \dots, v_n$ is a basis of $\Ker A$.
 \end{proof}
 
\begin{lemma} \label{lemmark1}
For each nilpotent matrix $N$ with $\rk N = 1$ there exists a nilpotent matrix $M$ such that $N$ and $M$ generate $\mathfrak{sl}_n(\K)$. 
\end{lemma}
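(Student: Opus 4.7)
The plan is to reduce this statement to the previous two lemmas via a conjugation trick. The basic observation is that all nonzero rank-$1$ nilpotent matrices in $\mathfrak{gl}_n(\K)$ are conjugate: if $\rk N = 1$ then $N = uv^{\top}$ for some column/row vectors, and nilpotency forces $v^{\top}u = 0$, so $N^{2}=0$ and $N$ has Jordan type one block of size $2$ plus $n-2$ blocks of size $1$. Hence any given $N$ of rank $1$ is $\GL_n(\K)$-conjugate to the particular rank-$1$ nilpotent produced by Lemma~\ref{lemma2}.

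Concretely, I would start by fixing a consistent diagonal matrix $C$, which exists by the paragraph following the definition of consistent sets (using that $\K$ is infinite of characteristic $\neq 2$). Then Lemma~\ref{lemma2} writes $C = A + B$ with $A$ nilpotent of rank $1$ whose entries are all nonzero, and $B$ nilpotent. Since every off-diagonal entry of $A$ is nonzero, Lemma~\ref{lemma1} applies to the pair $(C, A)$ and yields that $A$ and $C$ generate $\mathfrak{sl}_n(\K)$ as a Lie algebra. Because $B = C - A$, the subalgebra generated by $A$ and $B$ coincides with the one generated by $A$ and $C$, so $A$ and $B$ already generate $\mathfrak{sl}_n(\K)$.

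To finish, I would pick $g \in \GL_n(\K)$ with $gAg^{-1} = N$, which exists by the conjugacy observation above, and set $M := gBg^{-1}$. Then $M$ is nilpotent (conjugation preserves nilpotency), and the Lie subalgebra generated by $N$ and $M$ is $g \cdot \langle A, B\rangle_{\mathrm{Lie}} \cdot g^{-1} = g \cdot \mathfrak{sl}_n(\K) \cdot g^{-1} = \mathfrak{sl}_n(\K)$, as $\mathfrak{sl}_n(\K)$ is stable under conjugation by $\GL_n(\K)$.

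The main obstacle is essentially conceptual rather than computational: one must notice that the rank-$1$ nilpotent $A$ manufactured by Lemma~\ref{lemma2} already has the right Jordan type to be conjugated to an arbitrary prescribed $N$, so the abstract choice of ambient basis in Lemma~\ref{lemma2} can be transported to the basis that makes $N$ play its role. Once that is in place, the generation property transfers by conjugacy and nothing further needs to be verified.
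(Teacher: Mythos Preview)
Your proposal is correct and follows essentially the same route as the paper: apply Lemma~\ref{lemma2} to a consistent diagonal matrix to obtain nilpotents $A$ (rank $1$, all entries nonzero) and $B$, invoke Lemma~\ref{lemma1} to see that $A$ and the consistent matrix---hence $A$ and $B=C-A$---generate $\mathfrak{sl}_n(\K)$, and then conjugate to an arbitrary rank-$1$ nilpotent $N$. The only difference is that you spell out the intermediate step ($\langle A,B\rangle_{\mathrm{Lie}}=\langle A,C\rangle_{\mathrm{Lie}}$) and the Jordan-type justification for conjugacy, both of which the paper leaves implicit.
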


\begin{proof}
Let $T$ be a consistent matrix, $A$ and $B$ be matrices from Lemma~\ref{lemma2}. It follows from Lemma~\ref{lemma1} that $A$ and $B$ generate $\mathfrak{sl}_n(\K)$. All nilpotent matrices of rank $1$ are conjugate, i.e. for any nilpotent $N$ with $\rk N = 1$ there exists $C \in \GL_n(\K)$ such that $N = CAC^{-1}$. Moreover, if $A$ and $B$ generate $\mathfrak{sl}_n(\K)$ and $C \in \GL_n(\K)$, then $N$ and $CBC^{-1}$ also generate $\mathfrak{sl}_n(\K)$. This completes the proof.
\end{proof}

\begin{lemma}\label{helpZariski}
Let $V$ be a finite-dimensional vector space over an arbitrary
field $\K$. Then a \nolinebreak set of collections consisting of ordered $n$ linearly independent vectors is open in the Zariski topology on $V^n$.

\end{lemma}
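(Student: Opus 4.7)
The plan is to reduce the statement to the fact that the non-vanishing locus of a finite collection of polynomial functions is Zariski-open. First I would fix a basis $e_1,\ldots,e_d$ of $V$, identifying $V \cong \K^d$ and $V^n \cong \K^{dn}$ as affine spaces; the Zariski topology on $V^n$ is by definition the topology whose closed sets are the common zero loci of polynomials in the $dn$ coordinate functions.

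Next, given an ordered tuple $(v_1,\ldots,v_n) \in V^n$, I would form the $d \times n$ matrix $M(v_1,\ldots,v_n)$ whose $j$-th column is the coordinate vector of $v_j$. Linear independence of $v_1,\ldots,v_n$ is equivalent to $\rk M = n$. If $n > d$ the desired set is empty, hence trivially open, so I may assume $n \leq d$. Then $\rk M = n$ is equivalent to the existence of at least one nonzero $n \times n$ minor of $M$.

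The key observation is that each such $n \times n$ minor is a determinantal polynomial in the entries of $M$, i.e.\ a polynomial in the $dn$ coordinates on $V^n$. The set where \emph{all} $\binom{d}{n}$ such minors vanish simultaneously is therefore Zariski-closed, and its complement — which is exactly the set of tuples of linearly independent vectors — is Zariski-open. Since the Zariski topology on $V^n$ is independent of the choice of basis (a change of basis is a polynomial automorphism of $V^n$), this concludes the proof. I do not anticipate a serious obstacle here; the only mild subtlety is handling the degenerate case $n > d$ separately, which is immediate.
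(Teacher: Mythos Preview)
Your argument is correct and is essentially the same as the paper's: fix a basis, form the coordinate matrix of the tuple $(v_1,\ldots,v_n)$, and observe that linear independence is equivalent to the non-vanishing of at least one $n\times n$ minor, which is a polynomial condition defining an open set. Your version is slightly more explicit (handling $n>d$ and noting basis-independence), but the content is identical.
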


\begin{proof}
We fix a basis of $V$. For every set $\{v_1,\dots,v_n\; | \;v_l \in V\}$ let us build a matrix consisting of $n$ rows and $\dim V$ columns such that $l$-th row consists of the coordinates of \nolinebreak $v_l$. It is only left to notice that $v_1,\dots,v_n$ are linearly independent if and only if  there is at least one nonzero minor of order $n$.
\end{proof}

\begin{lemma}\label{Zariski}
For any given matrix $B$ a set of matrices $X$ such that $B$ and $X$ generate $\mathfrak{sl}_n(\K)$ is open in the Zariski topology on $\mathfrak{sl}_n(\K)$.
\end{lemma}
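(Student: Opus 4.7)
The plan is to exhibit the set in question as a union of Zariski open subsets of $\mathfrak{sl}_n(\K)$, each one indexed by a tuple of iterated Lie brackets. The first step is to recall that the Lie subalgebra $\langle B, X\rangle$ generated by $B$ and $X$ coincides with the linear span of all iterated commutators built from the letters $B$ and $X$. For every nonassociative word $w$ in two symbols, let $w(B,X)$ denote the corresponding iterated bracket. Since the Lie bracket is bilinear, each entry of $w(B,X)$ is a polynomial in the entries of $X$ (with $B$ held fixed), so $X \mapsto w(B,X)$ is a morphism of affine varieties $\mathfrak{sl}_n(\K) \to \mathfrak{sl}_n(\K)$.

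The second step is to translate the generation condition into a linear independence condition. The equality $\langle B, X \rangle = \mathfrak{sl}_n(\K)$ holds if and only if $\dim \langle B,X\rangle \geq n^2-1$, which is equivalent to the existence of words $w_1,\dots,w_{n^2-1}$ such that the vectors $w_1(B,X),\dots,w_{n^2-1}(B,X)$ are linearly independent in $\mathfrak{sl}_n(\K)$. Consequently the set we must show is open equals the union
\[
\bigcup_{(w_1,\dots,w_{n^2-1})} U_{w_1,\dots,w_{n^2-1}},
\]
where $U_{w_1,\dots,w_{n^2-1}} \subseteq \mathfrak{sl}_n(\K)$ is the locus of matrices $X$ for which the specific tuple $(w_i(B,X))_{i=1}^{n^2-1}$ is linearly independent.

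The third step is to observe that each $U_{w_1,\dots,w_{n^2-1}}$ is Zariski open. Indeed, the map $X \mapsto (w_1(B,X),\dots,w_{n^2-1}(B,X))$ is a morphism from $\mathfrak{sl}_n(\K)$ to $\mathfrak{sl}_n(\K)^{n^2-1}$ by the first step, and the set of linearly independent $(n^2-1)$-tuples inside $\mathfrak{sl}_n(\K)^{n^2-1}$ is Zariski open by Lemma~\ref{helpZariski}. A preimage of an open set under a morphism is open, and an arbitrary union of open sets is open, which concludes the argument.

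The only place where one must be slightly careful is the equivalence in the second step, i.e.\ the fact that the associative subalgebra interpretation does not intrude: one should use that the free Lie algebra on two generators is spanned (as a vector space) by iterated brackets of its generators, so $\langle B,X\rangle$ is the $\K$-linear span of the vectors $w(B,X)$. This is standard and can be seen by a direct induction on bracket depth. Everything else is a routine application of the polynomiality of the bracket and Lemma~\ref{helpZariski}.
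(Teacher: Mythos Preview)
Your proof is correct and follows essentially the same approach as the paper: both arguments express the generating locus as a union, over tuples of iterated brackets, of the sets where those brackets are linearly independent, and then conclude by invoking the polynomiality of the bracket together with Lemma~\ref{helpZariski}. Your use of nonassociative words $w$ is just a different bookkeeping device for the paper's enumeration $\Com_i(X)$.
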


\begin{proof} For any given matrix $B$ and a variable $X$ let us consider a set of all matrices that can be obtained from $B$ and $X$ by means of the Lie bracket [ , ]:
$$
\Com(X, B) \coloneqq \{X,\,B,\; [X,\,B],\,[B,\,X],\,\lbrack X,\,[X,\,B]],\,[B,\,[X,\,B]],\,[[X,\,B],\, X],\dots \}
$$
Let us enumerate all the matrices from $\Com(X,B)$ in some order independent of X. For example, we put $\Com_1(X)= X$, $\Com_2(X)=B$, $\Com_3(X) = [X,\,B]$, $\Com_4(X) = [X,\,[X,\,B]]$, $\Com_5(X) = [B,\,[X,\,B]],\ldots$

Obviously, a subalgebra of $\mathfrak{sl}_n(\K)$ generated by $X$ and $B$ is a linear span of elements of $\Com(X,B)$. Let $I$ be an arbitrary set of indices ${i_1,\ldots,i_{n^2-1}}$ and let $M_I$ be a set of matrices $X$ such that $\Com_{i_{1}}(X),\ldots,\Com_{i_{n^2-1}}(X)$ are linearly independent. Let us construct a map 
$$
\varphi_I: \mathfrak{sl}_n(\K) \longrightarrow (\mathfrak{sl}_n(\K))^{n^2-1} \quad \text{by the rule}\quad X \rightarrow (\Com_{i_{1}}(X),\ldots,\Com_{i_{n^2-1}}(X)).
$$
It is defined by polynomials. Let us look at ordered collections consisting of $n^2-1$ linearly independent matrices. According to Lemma~\ref{helpZariski}, a set compiled from all such collections is open in the Zariski topology on $(\mathfrak{sl}_n(\K))^{n^2-1}$. Then the preimage of this set under $\varphi_I$ is open. Lemma~\ref{Zariski} follows from the fact that a set of matrices $X$ such that $B$ and $X$ generate $\mathfrak{sl}_n(\K)$ is a union of all possible $M_I$.
\end{proof}

\begin{lemma} \label{lemmatheorem}
For any matrix  $A = \sum_{i=1}^{n-1} {a_i E_{i\,i+1}},\; a_1 = 1$, there exists a nilpotent matrix $B$ such that $A$ and $B$ generate $\mathfrak{sl}_n(\K)$.
\end{lemma}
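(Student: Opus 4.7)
The plan is to use Lemma~\ref{Zariski} to reduce the statement to exhibiting a single nilpotent matrix $B$ generating $\mathfrak{sl}_n(\K)$ together with $A$; by that lemma, the admissible $B$ form a Zariski-open subset of $\mathfrak{sl}_n(\K)$, so one concrete example suffices. My candidate is
$$
B \;=\; \sum_{i=2}^{n-1} E_{i,i+1} \;+\; \gamma\, E_{n,1}, \qquad \gamma \in \K\setminus\{0\},
$$
a regular nilpotent realizing the Jordan chain $e_1 \mapsto \gamma e_n \mapsto \gamma e_{n-1} \mapsto \cdots \mapsto \gamma e_2 \mapsto 0$; in particular $B^n=0$.

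To verify that $A$ and $B$ generate $\mathfrak{sl}_n(\K)$, I would proceed in three steps. First, I produce a nonzero diagonal matrix $D$ in the generated subalgebra by iterated bracketing; the pattern I expect is $D = \operatorname{ad}(B)^{n-2}([A,B])$. In dimension $n=3$ this yields $[B,[A,B]] = \gamma(-E_{11}-E_{22}+2E_{33})$, and in dimension $n=4$ one checks that $[B,[B,[A,B]]] = \gamma(E_{11}-E_{22}+3E_{33}-3E_{44})$, both independent of the values of $a_2,\dots,a_{n-1}$ thanks to explicit cancellations; the wrap-around $\gamma E_{n,1}$ forces the bracketing around a cyclic chain that lands on the diagonal after exactly $n-2$ steps. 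Second, since $\operatorname{ad}(D)$ acts on the matrix unit $E_{ij}$ by the eigenvalue $d_i - d_j$, a Vandermonde-type argument mirroring the proof of Lemma~\ref{lemma1} splits elements such as $A$, $B$, $[A,B]$ into individual matrix units by eigenspace projection under polynomials in $\operatorname{ad}(D)$. If a single $D$ does not resolve every pair $(i,j)$ (in $n=4$, for instance, $E_{2,3}$ and $E_{4,1}$ share the eigenvalue $-4$), additional commutators yield a second diagonal $D'$ --- for instance, once $E_{1,n}$ has been extracted, $[E_{1,n}, \gamma E_{n,1}] = \gamma(E_{1,1}-E_{n,n})$ is available --- and the joint eigenvalues $(d_i-d_j,\,d'_i-d'_j)$ separate everything. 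Third, once enough individual $E_{ij}$ lie in the subalgebra, the identities $[E_{ij},E_{jk}] = E_{ik}$ and $[E_{ij},E_{ji}] = E_{ii}-E_{jj}$ close up exactly as in the last lines of the proof of Lemma~\ref{lemma1}, and recover all of $\mathfrak{sl}_n(\K)$.

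The principal obstacle lies in the first two steps when many of $a_2,\dots,a_{n-1}$ vanish, most acutely when $A = E_{1,2}$: brackets involving $A$ then contribute very little structure, and the required diagonal matrices must be produced essentially from nested brackets built out of $B$. Verifying in every pattern of vanishing of the $a_i$'s that enough independent diagonals arise for full separation is the technical heart of the proof. If sporadic values of $\gamma$ should cause the construction to fail, the parameter is a safety net: by Lemma~\ref{Zariski} the set of $\gamma \in \K\setminus\{0\}$ for which $(A, B(\gamma))$ fails to generate is a proper Zariski-closed subset of $\K\setminus\{0\}$, which the infinite field cannot exhaust.
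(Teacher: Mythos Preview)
Your proposal is an outline, not a proof, and the gap is exactly where you say it is. You propose a concrete $B(\gamma)$ and sketch how brackets with $A$ should produce a useful diagonal $D$, but the general claim that $\operatorname{ad}(B)^{n-2}([A,B])$ is a nonzero diagonal, together with the subsequent separation argument, is verified only for $n=3,4$. Your closing appeal to Lemma~\ref{Zariski} is circular: that lemma guarantees the set of good $\gamma$ is Zariski-open, but ``proper'' requires knowing that at least one $\gamma$ works for the given $A$, which is precisely the unproven step. So for general $n$ and general vanishing patterns among $a_2,\dots,a_{n-1}$ nothing has been established.

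The paper avoids this case analysis by reversing the roles in the Zariski argument. Instead of fixing $A$ and searching for $B$, it fixes a single nilpotent $B_0$ that is already known (from Lemma~\ref{lemmark1}) to generate $\mathfrak{sl}_n(\K)$ together with $E_{12}$, and then varies the \emph{first} argument over the family $X=\sum_i x_i a_i E_{i,i+1}$. All such $X$ with every $x_i\neq 0$ are conjugate to $A$ by a diagonal matrix, while the degenerate point $(x_1,\dots,x_{n-1})=(1,0,\dots,0)$ gives $E_{12}$. By Lemma~\ref{Zariski} the set of $(x_1,\dots,x_{n-1})$ for which $X$ and $B_0$ generate is open and contains $(1,0,\dots,0)$, hence (the field being infinite) it meets the locus $\{x_i\neq 0\ \forall i\}$. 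Conjugating $B_0$ back then produces the desired $B$ for the original $A$. The point you are missing is that the hardest case $A=E_{12}$ is already handled by the rank-one lemma, and Zariski openness lets you deform from $E_{12}$ to a conjugate of $A$ rather than attacking each $A$ directly.
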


\begin{proof}
Lemma~\ref{lemmark1} implies that there exists a nilpotent matrix $B_0$ such that $E_{12}$ and $B_0$ generate $\mathfrak{sl}_n(\K)$. Consider all matrices of the form
$$
X = \sum_{i=1}^{n-1} {x_i a_i E_{i\,i+1}}.
$$
Obviously, matrices $X$ and $A$ are conjugate if $x_i \neq 0$ for all $i$.
According to Lemma~\ref{Zariski}, there is a polynomial $F(x_1,\dots,x_{n-1})$ such that the following conditions hold:
\begin{enumerate}
    \item $F(1,0,\dots,0) \neq 0;$
    \item $F(x_1,\dots,x_{n-1}) \neq 0 \Longrightarrow B_0$ and $X$ generate $\mathfrak{sl}_n(\K).$
\end{enumerate}
Since $F$ is a nonzero polynomial, there exists a set of nonzero numbers $\a_1,\dots,\a_{n-1}$ such that $F(\a_1,\dots,\a_{n-1}) \neq 0$. It implies that matrices $B_0$ and $X_0 =\sum_{i=1}^{n-1} {\a_i a_i E_{i\,i+1}}$ generate \nolinebreak $\mathfrak{sl}_n(\K)$.  
The fact that $A$ and $X_0$ are conjugate completes the proof.
\end{proof}

\begin{proof}[Proof of Theorem~\ref{theorem}]
For any non-degenerate nilpotent linear operator $X$
there exists a basis such that the matrix of $X$ in this basis has the following form: 
$$
A = \sum_{i=1}^{n-1} {a_i E_{i\,i+1}},\;\; \; a_1 = 1.
$$
In other words, if $X$ is a nonzero nilpotent matrix, there exists $C \in \GL_n(\K)$ such that $A = CXC^{-1}$. It follows from Lemma~\ref{lemmatheorem} that there exists a nilpotent matrix $B$ such that $A$ and $B$ generate $\mathfrak{sl}_n(\K)$. Thus, $X$ and $C^{-1}BC$ generate $\mathfrak{sl}_n(\K)$.
\end{proof}

\section{examples and problems}

Let us give two examples with specific pairs of nilpotent matrices generating $\mathfrak{sl}_n(\K)$. Matrices from the first example generate $\sl_n(\K)$ over an infinite field $\K$ of arbitrary characteristic except for $n = 4$ if $\char\K = 2$.

\begin{example}
Let $\K$ be an infinite field and $\a_1,\dots,\a_n$ ($\a_i \in \K$) be a set such that following conditions hold:
\begin{enumerate}
    \item $\a_1 +\ldots+\a_n = 0$;
    \item $\a_{i+1} \neq \a_i$ for all $i$;
    \item $\a_{i+1} - \a_i = \a_{k+1} - \a_k$ only for $i = k;$
    \item $\a_1+\dots+\a_k \neq 0$ for all $k = 1,\dots,n-1.$
\end{enumerate}
Such sets exist except for $n = 4$ if $\char\K = 2$ (in this case condition (1) implies $\a_{44} - \a_{33} = \a_{22}-\a_{11}$). Let us denote by $s_k$ the element $\a_1+\ldots+\a_k$ and consider the following matrices:
$$A = \begin{pmatrix}
  0 & 1 & 0  & \ldots & 0\\
  0 & 0 & 1  & \dots & 0 \\
  \vdots &  \vdots & \vdots & \ddots & \vdots \\
  0 & 0 & 0 &  \dots & 1\\
  0 & 0 & 0 &  \dots & 0
\end{pmatrix} \quad B = \begin{pmatrix}
  0 &  \ldots & 0 & 0 & 0\\
  s_1 &  \dots & 0 & 0 & 0\\
  \vdots &\ddots & \vdots & \vdots & \vdots \\
  0 & \dots & s_{n-2} & 0 & 0\\
  0 &  \dots& 0  & s_{n-1} & 0
\end{pmatrix}$$ 
Let us show that A and B generate $\mathfrak{sl}_n(\K)$. Indeed, $T = [A, B]$ is a diagonal matrix with the entries $t_{ii} = \a_i$. Similarly to the proof of Lemma~\ref{lemma1}, we obtain that matrices $T$ and $A$ generate a subalgebra of $\mathfrak{sl}_n(K)$ containing \nolinebreak $E_{i\,i+1}$ for $i = 1,\ldots,n-1$. Since $[E_{ik}, E_{kl}] = E_{il}$,
this subalgebra contains all upper nil-triangular matrices.
Similarly, $T$ and $B$ generate a subalgebra containing all lower nil-triangular matrices.
Since $E_{ii} - E_{jj} = [E_{ij}, E_{ji}]$, $A$ and $B$ generate $\mathfrak{sl}_n(\K)$.
\end{example}

\begin{example} 
If $\char \K = 0$ and $n$ is odd, then the matrices 
$$
M = \begin{pmatrix}
  0 & 1 & 0 &  \ldots & 0\\
  0 & 0 & 1 &  \dots & 0 \\
  \vdots  & \vdots & \vdots & \ddots & \vdots \\
  0 & 0 & 0 &  \dots & 1\\
  0 & 0 & 0 &  \dots & 0
\end{pmatrix} \quad N = \begin{pmatrix}
  0 & 0 & 0  & \ldots & 0\\
  0 & 0 & 0  & \dots & 0 \\
  \vdots &   \vdots & \vdots & \ddots & \vdots \\
  0 & 0 & 0 &  \dots & 0\\
  1 & 0 & 0 &  \dots & 0
\end{pmatrix}
$$
generate $\mathfrak{sl}_n(\K)$.
Firstly let us consider the case $\K = \C$. It is possible to make a consistent set from different complex $n$-th roots of unity, since if $n$ is odd, a regular $n$-gon does not have any parallel and equal sides/diagonals. Let $T$ be a corresponding consistent matrix. It can be represented as $T = A+B$, where $A$, $B$ are the nilpotent matrices from Lemma~\ref{lemma2}. Using notations of Lemma~\ref{lemma2}, in the basis $v_1,\ldots,v_n$ the operators $A$ and $B$ have matrices $N = E_{1n}$ and $M = \sum_{i=1}^{n-1} {E_{i\,i+1}}$, respectively. Thus, $M$ and $N$ generate $\mathfrak{sl}_n(\K)$.
We conclude that the set $\Com(M,N)$ from Lemma~\ref{Zariski} contains $n^2-1$ linearly independent matrices. Since linear independence of matrices does not depend on the ground field, the matrices $M$ and $N$ generate $\mathfrak{sl}_n(\mathbb{Q})$ and $\mathfrak{sl}_n(\K)$, where $\K$ is an extension of the field $\mathbb{Q}$.
\end{example}

\begin{remark} If $n$ is even, $M$ and $N$ do not generate $\sl_n(\K)$.

Let us look at the set of matrices 
$$\Lambda = \{A \in \sl_n(\K)\; |\; AC^{-1} + C^{-1}A^T = 0\}, \quad \text{where} \quad 
C = 
\begin{pmatrix}
0 & 0 & \dots & (-1)^n \\
\vdots & \vdots & \ddots & \vdots \\ 
0 & 1 &\dots & 0\\
-1 & 0 &\dots & 0 \\
\end{pmatrix}.
$$

If $n \geqslant 3$ then $\Lambda$ is a proper subalgebra of $\sl_n(\K)$, and if  $n$ is even, we have $M,\,N \in \Lambda$.
\end{remark}

The proof of Theorem~\ref{theorem} implies that for any $n > 1$ there exists a number $N$ such that Theorem~\ref{theorem} holds for $\sl_n(\K)$, $|\K| \geqslant N$. It may be interesting to extend Theorem~\ref{theorem} to finite fields and fields of characteristic $2$.

\begin{problem}
What is the minimal generating set consisting of nilpotent matrices of the Lie algebra $\sl_n$ over a finite field?
\end{problem}

\begin{problem}
Does Theorem~\ref{theorem} hold for infinite field of characteristic $2$? 
\end{problem}

The following example shows that at least some conditions of Theorem~\ref{theorem} are necessary.

\begin {example} 
Let $\F_2$ be the field $\mathbb{Z} / 2\mathbb{Z}$. Let us show that for $\K = \F_2$, Theorem~\ref{theorem} does not hold. We claim that for the matrix $E_{12} \in \sl_3(\F_2)$ there does not exist a nilpotent matrix $Y$ such that $E_{12}$ and $Y$ generate  \nolinebreak $\sl_3(\F_2)$.

Consider linear operators given by the matrices $E_{12}$ and $Y$. If $\rk Y = 1$ then $\Ker Y$ and $\Ker E_{12}$ have nonempty intersection, hence the subalgebra generated by $E_{12}$ and $Y$ is not $\sl_3(\F_2)$. Thus $\rk Y = 2$. Since all nilpotent matrices of rank $2$ are conjugate in $\sl_3(\F_2)$, we only have to check that there is no $A$ of rank $1$ such that $A$ and $B = E_{12} + E_{23}$ generate \nolinebreak $\sl_3(\F_2)$.

Since the first column and the last row of the matrix $B$ are zero, the first column and the last row of the matrix $A$ are nonzero. It implies that $a_{31} = 1$. There are only $8$ such matrices. Two matrices of these eight are persymmetric and we can split other six matrices into pairs symmetric with respect to the antidiagonal matrices. Let $X'$ be a matrix such that $X$ and $X'$ are symmetric with respect to the antidiagonal. Since symmetry and antisymmetry are the same in $\F_2$ and $B = B'$, we have $[X, B]' = [X', B]$. It implies that if $X = X'$  the subalgebra $\sl_3(\F_2)$ generated by $X$ and $B$ consists of matrices symmetric with respect to the antidiagonal. Moreover, if $X$ and $B$ generate $\sl_3(\F_2)$, then $X'$ and $B'$ generate $\sl_3(\F_2)$. So it is only left to show that $A$ and $B$ do not generate $\sl_3(\F_2)$, where $A$ is one of the following three matrices:
$$
A_1 = \begin{pmatrix}
1 & 1 & 0 \\
1 & 1 & 0 \\
1 & 1 & 0\\
\end{pmatrix} ,  \quad 
A_2 = \begin{pmatrix}
1 & 0 & 1 \\
1 & 0 & 1 \\
1 & 0 & 1\\
\end{pmatrix} , \quad
A_3 = \begin{pmatrix}
0 & 0 & 0 \\
1 & 0 & 0 \\
1 & 0 & 0\\
\end{pmatrix} 
$$ 

Let us denote by $\Lambda_1$ the linear span of the matrices 
$$
\begin{pmatrix}
1 & 1 & 0 \\
1 & 1 & 0 \\
1 & 1 & 0\\
\end{pmatrix}  \quad 
\begin{pmatrix}
0 & 1 & 0 \\
0 & 0 & 1 \\
0 & 0 & 0\\
\end{pmatrix} \quad
\begin{pmatrix}
1 & 1 & 1 \\
0 & 0 & 1 \\
0 & 0 & 1\\
\end{pmatrix} \quad
\begin{pmatrix}
1 & 0 & 1 \\
1 & 0 & 1 \\
0 & 1 & 1\\
\end{pmatrix} 
$$

and by $\Lambda_2$ the linear span of the matrices 

$$
\begin{pmatrix}
1 & 0 & 1 \\
1 & 0 & 1 \\
1 & 0 & 1\\
\end{pmatrix}  \quad 
\begin{pmatrix}
0 & 1 & 0 \\
0 & 0 & 1 \\
0 & 0 & 0\\
\end{pmatrix} \quad
\begin{pmatrix}
  1 & 1 & 1 \\
  1 & 1 & 1 \\
  0 & 1 & 0\\
\end{pmatrix} \quad
\begin{pmatrix}
1 & 0 & 0 \\
0 & 0 & 1 \\
0 & 0 & 1\\
\end{pmatrix} \quad
\begin{pmatrix}
0 & 1 & 1 \\
0 & 0 & 1 \\
0 & 0 & 0\\
\end{pmatrix} 
$$
We have $A_1,\, B \in \Lambda_1$ and $A_2,\,A_3,\,B \in \Lambda_2$, and it is easy to verify directly that $\Lambda_1$ and $\Lambda_2$ are subalgebras of $\sl_n(\K)$. 
\end{example}

\end{document}